\newtheorem{theorem}{Theorem}[section]
\newtheorem{proposition}[theorem]{Proposition}
\theoremstyle{definition}
\newtheorem{definition}[theorem]{Definition}
\begin{document}

\title{Hypercyclic and mixing composition operators on $H^{p}$}

\author{Zhen Rong}

\address{\hskip-\parindent
Z.R., College of statistics and mathematics, Inner Mongolia University of Finance and Economics,
Hohhot 010000, China.}
\email{rongzhenboshi@sina.com}

\date{June 1, 2023}

\subjclass[2010]{47A16, 47B38}
\keywords{Hypercyclic operators, mixing operators, composition operators}

\begin{abstract}
Extending previous results of Bourdon and Shapiro we characterize the hypercyclic and mixing composition operators $C_{\varphi}$ for the automorphisms of $\mathbb{D}$ on any of the spaces $H^{p}$ with $1\leqslant p<+\infty$.
\end{abstract}

\maketitle

\section{Introduction} \label{S-introduction}
Throughout this article, let $\mathbb{N}$ denote the set of nonnegative integers. Let $\mathbb{C}$ denote the complex number field. Let $\mathbb{T}=\{z\in\mathbb{C}:|z|=1\}$ and $\mathbb{D}=\{z\in\mathbb{C}:|z|<1\}$.

An {\it automorphism} of $\mathbb{D}$ is a bijective analytic function $\varphi:\mathbb{D}\rightarrow\mathbb{D}$. The set of all automorphisms of $\mathbb{D}$ is denoted by $Aut(\mathbb{D})$. It is well known that the automorphisms of $\mathbb{D}$ are the linear fractional transformations
$$\varphi(z)=b\frac{z-a}{1-\overline{a}z}, |a|<1, |b|=1.$$
Moreover, every $\varphi\in Aut(\mathbb{D})$ maps $\mathbb{T}$ bijectively onto itself (see \cite[pages 131-132]{Conway}).

For $1\leqslant p<+\infty$, let $H^{p}$ denote the space of all analytic functions on $\mathbb{D}$ for which
$$\sup\limits_{0\leqslant r<1}(\frac{1}{2\pi}\int_{0}^{2\pi}|f(re^{i\theta})|^{p}d\theta)^{\frac{1}{p}}<+\infty.$$
For any $f\in H^{p}$, let
$$\|f\|_{p}=\sup\limits_{0\leqslant r<1}(\frac{1}{2\pi}\int_{0}^{2\pi}|f(re^{i\theta})|^{p}d\theta)^{\frac{1}{p}}.$$
Then $(H^{p},\|\cdot\|_{p})$ is a Banach space.

Let $\varphi\in Aut(\mathbb{D})$ and let $C_{\varphi}(f)=f\circ\varphi(f\in H^{p})$ be the corresponding composition operator on $H^{p}$. It is well known that for any $1\leqslant p<+\infty$ and $\varphi\in Aut(\mathbb{D})$, $C_{\varphi}$ defines a continuous linear operator on $H^{p}$ (see \cite[pages 220-221]{Zhu}).

A continuous linear operator $T$ on a Banach space $X$ is called {\it hypercyclic} if there is an element $x$ in $X$ whose orbit $\{T^{n}x:n\in\mathbb{N}\}$ under $T$ is dense in $X$; {\it topologically transitive} if for any pair $U,V$ of nonempty open subsets of $X$, there exists some nonnegative integer $n$ such that $T^{n}(U)\cap V\neq\emptyset$; and {\it mixing} if for any pair $U,V$ of nonempty open subsets of $X$, there exists some nonnegative integer $N$ such that $T^{n}(U)\cap V\neq\emptyset$ for all $n\geqslant N$.

Recently Bourdon and Shapiro \cite{Bourdon-Shapiro1,Bourdon-Shapiro2} have done an extensive study of cyclic and hypercyclic linear fractional composition operators on $H^{2}$. Zorboska \cite{Zorboska} has determined hypercyclic and cyclic composition operators induced by a linear fractional self map of $\mathbb{D}$, acting on a special class of smooth weighted Hardy spaces $H^{2}(\beta)$. Gallardo and Montes \cite{Gallardo-Montes} have obtained a complete characterization of the cyclic, supercyclic and hypercyclic composition operators $C_{\varphi}$ for linear fractional self-maps $\varphi$ of $\mathbb{D}$ on any of the spaces $\mathcal{S}_{\nu},\nu\in\mathbb{R}$. In particular, $\mathcal{S}_{0}$ is the Hardy space $H^{2}$, $\mathcal{S}_{-1/2}$ is the Bergman space $A^{2}$, and $\mathcal{S}_{1/2}$ is the Dirichlet space $\mathcal{D}$ under an equivalent norm. Since the Hardy space $H^{2}$ is a particular case of the spaces $H^{p}$ with $1\leqslant p<+\infty$, it is therefore very natural to try to characterize the cyclic, supercyclic and hypercyclic composition operators $C_{\varphi}$ for linear fractional self-maps $\varphi$ of $\mathbb{D}$ on any of the spaces $H^{p}$ with $1\leqslant p<+\infty$. In this paper we will characterize the hypercyclic and mixing composition operators $C_{\varphi}$ for the automorphisms of $\mathbb{D}$ on any of the spaces $H^{p}$ with $1\leqslant p<+\infty$, generalizing the corresponding results in \cite{Bourdon-Shapiro1,Bourdon-Shapiro2}.

\begin{theorem}
Let $1\leqslant p<+\infty$. Let $\varphi\in Aut(\mathbb{D})$ and $C_{\varphi}$ be the corresponding composition operator on $H^{p}$. Then the following assertions are equivalent:

(1) $C_{\varphi}$ is hypercyclic;

(2) $C_{\varphi}$ is mixing;

(3) $\varphi$ has no fixed point in $\mathbb{D}$.
\end{theorem}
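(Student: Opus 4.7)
The plan is to prove the chain $(2)\Rightarrow(1)\Rightarrow(3)\Rightarrow(2)$. The first two implications are short. For $(2)\Rightarrow(1)$ one uses the Birkhoff transitivity theorem on the separable Banach space $H^{p}$: mixing trivially implies topological transitivity, and topological transitivity is equivalent to hypercyclicity. For $(1)\Rightarrow(3)$ I argue the contrapositive. If $a\in\mathbb{D}$ is a fixed point of $\varphi$, then $(C_{\varphi}^{n}f)(a)=f(\varphi^{n}(a))=f(a)$ for every $n\in\mathbb{N}$ and every $f\in H^{p}$; since point evaluation at $a$ is a bounded linear functional on $H^{p}$, the orbit of any $f$ lies in the proper closed affine hyperplane $\{g\in H^{p}:g(a)=f(a)\}$ and so cannot be dense.

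The substantive implication is $(3)\Rightarrow(2)$. Since $\varphi$ has no fixed point in $\mathbb{D}$, it is either parabolic, with a unique fixed point $\alpha\in\mathbb{T}$, or hyperbolic, with two distinct fixed points on $\mathbb{T}$; in the hyperbolic case I label them $\alpha,\beta$ so that $\alpha$ is the attracting one, and in the parabolic case I set $\beta=\alpha$. Because $\varphi$ is a linear fractional transformation, its iterates can be written down explicitly, and one reads off that $\varphi^{n}(z)\to\alpha$ and $\varphi^{-n}(z)\to\beta$ uniformly on compact subsets of $\overline{\mathbb{D}}\setminus\{\beta\}$ and $\overline{\mathbb{D}}\setminus\{\alpha\}$ respectively; in particular $\varphi^{n}(e^{i\theta})\to\alpha$ and $\varphi^{-n}(e^{i\theta})\to\beta$ for almost every $\theta$. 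I would then apply the following standard mixing criterion for an invertible continuous operator $T$ on a Banach space $X$: if a dense set $D\subseteq X$ satisfies $T^{n}x\to 0$ and $T^{-n}x\to 0$ for every $x\in D$, then $T$ is mixing. The one-line argument is that, given nonempty open $U,V$, the choice $f_{0}\in U\cap D$, $g_{0}\in V\cap D$, $h_{n}:=f_{0}+T^{-n}g_{0}$ gives $h_{n}\in U$ and $T^{n}h_{n}=T^{n}f_{0}+g_{0}\in V$ for all sufficiently large $n$.

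For $T=C_{\varphi}$ I would take $D=(z-\alpha)(z-\beta)\cdot\mathbb{C}[z]$, the polynomials vanishing at both boundary fixed points (so $(z-\alpha)^{2}\cdot\mathbb{C}[z]$ in the parabolic case). Every $f\in D$ belongs to the disc algebra, so
\[
\|C_{\varphi}^{n}f\|_{p}^{p}=\frac{1}{2\pi}\int_{0}^{2\pi}\bigl|f(\varphi^{n}(e^{i\theta}))\bigr|^{p}\,d\theta,
\]
the integrand is bounded by $\|f\|_{\infty}^{p}$ and converges to $|f(\alpha)|^{p}=0$ almost everywhere, and so dominated convergence gives $\|C_{\varphi}^{n}f\|_{p}\to 0$; the same argument with $\beta$ in place of $\alpha$ yields $\|C_{\varphi}^{-n}f\|_{p}\to 0$. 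The main obstacle, and the only point at which the extension beyond $p=2$ really requires $H^{p}$-specific input, is verifying that $D$ is dense in $H^{p}$ for every $1\leqslant p<\infty$. I would obtain this from the inner-outer factorization in $H^{p}$: the function $(z-\alpha)(z-\beta)$ is outer, because its boundary modulus $|e^{i\theta}-\alpha|\,|e^{i\theta}-\beta|$ has integrable logarithm on $\mathbb{T}$, and outer functions are cyclic vectors for the shift $M_{z}$ on $H^{p}$, so that the closed linear span of $\{z^{n}(z-\alpha)(z-\beta):n\geqslant 0\}$ is all of $H^{p}$.
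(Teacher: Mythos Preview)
Your proof is correct and follows the same cycle $(2)\Rightarrow(1)\Rightarrow(3)\Rightarrow(2)$ as the paper, with essentially identical arguments for the first two implications and the same dominated-convergence step for showing $\|C_{\varphi}^{\pm n}f\|_{p}\to 0$. The genuine difference lies in how density of the test set is established. The paper takes $X_{0}$ to be the functions analytic in a neighbourhood of $\overline{\mathbb{D}}$ vanishing at the attracting fixed point (and a separate $Y_{0}$ for the other fixed point), and proves density by a duality argument: for $1<p<\infty$ it identifies $(H^{p})^{\ast}\cong H^{q}$, computes that any annihilating functional must come from a $g\in H^{q}$ whose Taylor coefficients satisfy $a_{n}=a_{0}z_{0}^{n}$, and then invokes the Hausdorff--Young inequality (Proposition~2.9) to force $a_{0}=0$; reflexivity of $H^{p}$ then gives $\overline{X_{0}}=H^{p}$, and the case $p=1$ is handled separately by bootstrapping from $p=2$ via the inclusion $\|\cdot\|_{1}\leqslant\|\cdot\|_{2}$. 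Your route is more conceptual and uniform in $p$: you take the single set $D=(z-\alpha)(z-\beta)\mathbb{C}[z]$ and observe that $(z-\alpha)(z-\beta)$ is outer (no zeros in $\mathbb{D}$, and $\log|e^{i\theta}-\alpha|\in L^{1}(\mathbb{T})$), hence cyclic for the shift on every $H^{p}$, $1\leqslant p<\infty$, by the classical Beurling-type description of shift-invariant subspaces. This avoids the case split, the reflexivity machinery, and the Hausdorff--Young step, at the cost of quoting the cyclicity of outer functions---a standard but nontrivial $H^{p}$ fact. Both approaches are valid; yours is shorter and arguably more natural from the function-theory side, while the paper's is more self-contained given the propositions it has already assembled.
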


Bourdon and Shapiro \cite{Bourdon-Shapiro1,Bourdon-Shapiro2} proved the above result in the case $p=2$. Hence the above result generalizes the corresponding results in \cite{Bourdon-Shapiro1,Bourdon-Shapiro2}.

This paper is organized as follows. In Section~\ref{S-hypercyclic} we characterize the hypercyclic and mixing composition operators $C_{\varphi}$ for the automorphisms of $\mathbb{D}$ on any of the spaces $H^{p}$ with $1\leqslant p<+\infty$.

\section{Hypercyclic and mixing composition operators on $H^{p}$} \label{S-hypercyclic}
In this section we characterize the hypercyclic and mixing composition operators $C_{\varphi}$ for the automorphisms of $\mathbb{D}$ on any of the spaces $H^{p}$ with $1\leqslant p<+\infty$, generalizing the corresponding results in \cite{Bourdon-Shapiro1,Bourdon-Shapiro2}.

The following propositions are the major techniques we need.

If $f\in H^{p}(1\leqslant p<+\infty)$, then $\lim\limits_{r\rightarrow1^{-}}f(re^{i\theta})$ exists for almost all values of $\theta$ (see \cite[page 17]{Duren}), thus defining a function which we denote by $f(e^{i\theta})$.

We need the following important properties of the $H^{p}$-spaces (see \cite[pages 9, 12, 21]{Duren}).

\begin{proposition}
Let $f\in H^{p},1\leqslant p<+\infty$. Then

(1) $\|f\|_{p}=\lim\limits_{r\rightarrow1^{-}}(\frac{1}{2\pi}\int_{0}^{2\pi}|f(re^{i\theta})|^{p}d\theta)^{\frac{1}{p}}$;

(2) $\lim\limits_{r\rightarrow1^{-}}\int_{0}^{2\pi}|f(re^{i\theta})|^{p}d\theta=\int_{0}^{2\pi}|f(e^{i\theta})|^{p}d\theta$;

(3) $\lim\limits_{r\rightarrow1^{-}}\int_{0}^{2\pi}|f(re^{i\theta})-f(e^{i\theta})|^{p}d\theta=0$.
\end{proposition}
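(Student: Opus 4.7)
The plan is to exploit the subharmonicity of $|f|^p$ (valid since $f$ is analytic and $p\geqslant 1$) together with the existence of radial limits almost everywhere, establishing the three parts in sequence. For (1), I would first prove that the integral mean $M_p(r,f):=\frac{1}{2\pi}\int_0^{2\pi}|f(re^{i\theta})|^p\,d\theta$ is a non-decreasing function of $r\in[0,1)$. Since $|f|^p$ is subharmonic, for any $0<r<R<1$ the Poisson integral inequality gives
\[
|f(re^{i\theta})|^p\leqslant \frac{1}{2\pi}\int_0^{2\pi}P_{r/R}(\theta-\phi)|f(Re^{i\phi})|^p\,d\phi,
\]
where $P_\rho$ denotes the Poisson kernel. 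Integrating in $\theta$ and using $\frac{1}{2\pi}\int_0^{2\pi} P_{r/R}(\theta-\phi)\,d\theta=1$ one obtains $M_p(r,f)\leqslant M_p(R,f)$. Because $M_p(\cdot,f)$ is monotone in $r$, $\sup_{0\leqslant r<1}M_p(r,f)=\lim_{r\to 1^-}M_p(r,f)$, which is (1).

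For (2), the inequality $\int_0^{2\pi}|f(e^{i\theta})|^p\,d\theta\leqslant 2\pi\|f\|_p^p$ follows immediately from Fatou's lemma applied to the a.e.\ convergence $|f(re^{i\theta})|^p\to |f(e^{i\theta})|^p$. The reverse inequality is the subtle point. Here I would invoke the fact that every $f\in H^p$ with $p\geqslant 1$ is the Poisson integral of its boundary function, namely $f(re^{i\theta})=\frac{1}{2\pi}\int_0^{2\pi}P_r(\theta-\phi)f(e^{i\phi})\,d\phi$. Applying Jensen's inequality to the convex map $t\mapsto |t|^p$ against the probability measure $P_r(\theta-\cdot)\,d\phi/(2\pi)$ gives $|f(re^{i\theta})|^p\leqslant \frac{1}{2\pi}\int_0^{2\pi}P_r(\theta-\phi)|f(e^{i\phi})|^p\,d\phi$, and integrating in $\theta$ yields $\int_0^{2\pi}|f(re^{i\theta})|^p\,d\theta\leqslant \int_0^{2\pi}|f(e^{i\phi})|^p\,d\phi$. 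Letting $r\to 1^-$ and combining with (1) forces equality, proving (2).

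For (3), I would combine the a.e.\ pointwise convergence $f(re^{i\theta})\to f(e^{i\theta})$ with the $L^p$-norm convergence obtained in (2). A classical lemma asserts that if $g_n\to g$ a.e.\ and $\|g_n\|_p\to \|g\|_p$ with $1\leqslant p<\infty$, then $\|g_n-g\|_p\to 0$; applying it to $g_r(\theta)=f(re^{i\theta})$ and $g(\theta)=f(e^{i\theta})$ immediately yields (3). The auxiliary lemma itself can be verified via the inequality $|a-b|^p\leqslant 2^{p-1}(|a|^p+|b|^p)$ combined with Fatou applied to the nonnegative expression $2^{p-1}(|g_n|^p+|g|^p)-|g_n-g|^p$.

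The main obstacle is the Poisson representation used in the proof of (2): establishing that $f(re^{i\theta})$ is the Poisson integral of $f(e^{i\phi})$ when $f\in H^p$ and $p\geqslant 1$ requires first knowing that the boundary function lies in $L^p(\mathbb{T})$ and then verifying that its Poisson extension coincides with $f$ inside the disk. For $p>1$ one can proceed via weak-$*$ compactness of $\{f_r\colon 0\leqslant r<1\}$ in $L^p(\mathbb{T})$, exploiting the reflexivity of $L^p$; the case $p=1$ is more delicate, since $L^1$ is not a dual, and the standard remedy is to factor $f$ as a Blaschke product times an outer function and argue on each factor separately. Once this representation is secured, the remainder of the proof consists of straightforward Fatou-plus-Jensen manipulations.
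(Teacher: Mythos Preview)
The paper does not supply its own proof of this proposition; it simply cites Duren, \emph{Theory of $H^p$ Spaces}, pages 9, 12, and 21. Your argument is correct and is essentially the standard development found in that reference: monotonicity of the integral means via subharmonicity of $|f|^p$ gives (1); Fatou's lemma together with the Poisson representation and Jensen's inequality gives (2); and combining a.e.\ radial convergence with the norm convergence from (2) gives (3).

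One minor remark on packaging: for part (3) you invoke the general measure-theoretic lemma ``a.e.\ convergence plus convergence of $L^p$ norms implies $L^p$ convergence.'' Duren's treatment on page~21 instead proceeds by the Riesz factorization trick (writing $f=Bg$ with $B$ Blaschke and $g$ zero-free, so that $g^{p/2}\in H^2$, and reducing to the Hilbert-space case). Both routes are standard; yours is arguably cleaner once (2) is in hand. Your identification of the Poisson representation for $p=1$ as the only genuinely delicate step is accurate, and the remedies you mention (weak-$*$ compactness in the space of measures together with the F.~and M.~Riesz theorem, or the factorization reduction to $H^2$) are exactly how this is handled in the literature.
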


We need the following property of the Taylor coefficients of $H^{p}$ functions (see \cite[page 94]{Duren}).

\begin{proposition}
Let $1\leqslant p\leqslant2$ and $f\in H^{p}$. Let $\sum\limits_{n=0}^{\infty}a_{n}z^{n}$ be the Taylor series of $f$ at the origin. Then $\{a_{n}\}_{n=0}^{\infty}\in l^{q}$, where $\frac{1}{p}+\frac{1}{q}=1$.
\end{proposition}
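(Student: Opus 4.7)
The plan is to recognize this statement as the Hausdorff--Young inequality applied to the boundary values of $H^{p}$ functions. First I would identify the Taylor coefficients of $f$ with the Fourier coefficients of its boundary function. Given $f\in H^{p}$, the boundary function $f(e^{i\theta})$ lies in $L^{p}(\Tb)$ with $\|f\|_{p}=\bigl(\frac{1}{2\pi}\int_{0}^{2\pi}|f(e^{i\theta})|^{p}d\theta\bigr)^{1/p}$ by Proposition 2.7(2). By Cauchy's integral formula on $|z|=r<1$ one has $a_{n}=\frac{1}{2\pi}\int_{0}^{2\pi}f(re^{i\theta})e^{-in\theta}d\theta$; letting $r\to 1^{-}$ and combining the $L^{p}$-convergence of $f(re^{i\theta})$ to $f(e^{i\theta})$ supplied by Proposition 2.7(3) with H\"{o}lder's inequality yields $a_{n}=\frac{1}{2\pi}\int_{0}^{2\pi}f(e^{i\theta})e^{-in\theta}d\theta$. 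A parallel argument shows that the Fourier coefficients of $f(e^{i\theta})$ of negative index all vanish, so $f(e^{i\theta})$ is the $L^{p}$ boundary trace of the analytic function $f$.

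Next I would dispose of the two endpoint cases. For $p=1$ and $q=\infty$ the estimate $|a_{n}|\leqslant\frac{1}{2\pi}\int_{0}^{2\pi}|f(e^{i\theta})|\,d\theta=\|f\|_{1}$ is immediate, so $\{a_{n}\}\in l^{\infty}$. For $p=2$ and $q=2$, the orthonormality of $\{e^{in\theta}\}_{n\in\Zb}$ in $L^{2}(\Tb,\tfrac{d\theta}{2\pi})$ together with Parseval's identity gives $\sum_{n=0}^{\infty}|a_{n}|^{2}=\frac{1}{2\pi}\int_{0}^{2\pi}|f(e^{i\theta})|^{2}d\theta=\|f\|_{2}^{2}<+\infty$, so $\{a_{n}\}\in l^{2}$.

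For the intermediate range $1<p<2$, I would invoke the Riesz--Thorin interpolation theorem applied to the linear operator sending a function on $\Tb$ to its Fourier coefficient sequence: by the two endpoint calculations this operator is bounded from $L^{1}(\Tb)$ to $l^{\infty}$ and from $L^{2}(\Tb)$ to $l^{2}$, each with norm at most $1$ with respect to the normalized measure $\tfrac{d\theta}{2\pi}$. Riesz--Thorin then yields boundedness from $L^{p}(\Tb)$ to $l^{q}$ for every conjugate pair with $1\leqslant p\leqslant 2$. Restricting this operator to $H^{p}$, viewed as the subspace of $L^{p}(\Tb)$ whose negative Fourier coefficients vanish, delivers the assertion. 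The main obstacle is really bookkeeping rather than mathematics: one must carefully justify that the Taylor coefficients at the origin coincide with the Fourier coefficients of the $L^{p}$ boundary function (and that this trace lies in $L^{p}(\Tb)$ for the full range $1\leqslant p<+\infty$). Once this identification is secured, the conclusion reduces to the classical Hausdorff--Young inequality and a standard appeal to Riesz--Thorin interpolation, both of which can be quoted verbatim from the literature.
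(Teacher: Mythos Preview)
Your argument is correct: identifying the Taylor coefficients with the Fourier coefficients of the boundary function and then applying the Hausdorff--Young inequality, established via the endpoint bounds $L^{1}\to l^{\infty}$, $L^{2}\to l^{2}$ and Riesz--Thorin interpolation, is the standard route to this result. Note, however, that the paper does not supply its own proof of this proposition; it simply quotes the statement from \cite[page 94]{Duren} as a known fact, so there is nothing in the paper to compare your argument against beyond observing that your proof is exactly the classical one found in Duren's book.
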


Our aim now is to characterize when $C_{\varphi}$ is hypercyclic on $H^{p}$. To this end we need some important dynamical properties of automorphisms $\varphi$ of $\mathbb{D}$.

Let
$$\varphi(z)=\frac{az+b}{cz+d}, ad-bc\neq0$$
be an arbitrary linear fractional transformation, which we consider as a map on the extended complex plane $\widehat{\mathbb{C}}$. Then $\varphi$ has either one or two fixed points in $\widehat{\mathbb{C}}$, or it is the identity.

Suppose that $\varphi$ has two distinct fixed points $z_{0}$ and $z_{1}$, and let $\sigma$ be a linear fractional transformation that maps $z_{0}$ to 0 and $z_{1}$ to $\infty$. Then $\psi:=\sigma\circ\varphi\circ\sigma^{-1}$ has fixed points 0 and $\infty$, which easily implies that $\psi(z)=\lambda z$ for some $\lambda\neq0$. The constant $\lambda$ is called the \emph{multiplier} of $\varphi$. Replacing $\sigma$ by $1/\sigma$ one sees that also $1/\lambda$ is a multiplier, which, however, causes no problem in the following.

\begin{definition}
Let $\varphi$ be a linear fractional transformation that is not the identity.

(a) If $\varphi$ has a single fixed point then it is called \emph{parabolic}.

(b) Suppose that $\varphi$ has two distinct fixed points, and let $\lambda$ be its multiplier. If $|\lambda|=1$ then $\varphi$ is called \emph{elliptic}; if $\lambda>0$ then $\varphi$ is called \emph{hyperbolic}; in all other cases, $\varphi$ is called \emph{loxodromic}.
\end{definition}

We need the following dynamical properties of automorphisms $\varphi$ of $\mathbb{D}$ (see \cite[pages 125-126]{Grosse-Erdmann-Peris}).

\begin{proposition}
Let $\varphi\in Aut(\mathbb{D})$, not the identity. Then we have the following:

(i) if $\varphi$ is parabolic then its fixed point $z_{0}$ lies in $\mathbb{T}$, and $\varphi^{n}(z)\rightarrow z_{0}, \varphi^{-n}(z)\rightarrow z_{0}$ for all $z\in\widehat{\mathbb{C}}$;

(ii) if $\varphi$ is elliptic then it has a fixed point in $\mathbb{D}$;

(iii) if $\varphi$ is hyperbolic then it has distinct fixed points $z_{0}$ and $z_{1}$ in $\mathbb{T}$ such that $\varphi^{n}(z)\rightarrow z_{0}$ for all $z\in\widehat{\mathbb{C}}, z\neq z_{1}$, and $\varphi^{-n}(z)\rightarrow z_{1}$ for all $z\in\widehat{\mathbb{C}}, z\neq z_{0}$;

(iv) $\varphi$ cannot be loxodromic.
\end{proposition}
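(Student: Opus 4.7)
My plan is to carry out a case analysis on the location and multiplicity of the fixed points of $\varphi$, viewed as a M\"obius transformation of $\widehat{\mathbb{C}}$, and to exploit the fact that $\varphi$ preserves the decomposition $\widehat{\mathbb{C}} = \mathbb{D}\sqcup\mathbb{T}\sqcup(\widehat{\mathbb{C}}\setminus\overline{\mathbb{D}})$.

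The first step is a symmetry principle: the reflection $R(z) = 1/\bar z$ swaps $\mathbb{D}$ with $\widehat{\mathbb{C}}\setminus\overline{\mathbb{D}}$ while fixing $\mathbb{T}$ pointwise, and a direct calculation from $\varphi(z) = b(z-a)/(1-\bar a z)$ gives $R\circ\varphi = \varphi\circ R$. Hence the fixed-point set of $\varphi$ in $\widehat{\mathbb{C}}$ is $R$-invariant, so any fixed point in $\mathbb{D}$ is paired with one in $\widehat{\mathbb{C}}\setminus\overline{\mathbb{D}}$, while fixed points on $\mathbb{T}$ are self-paired. Since a non-identity M\"obius transformation has at most two fixed points in $\widehat{\mathbb{C}}$, this pairing reduces the problem to three mutually exclusive cases: (A) one fixed point in $\mathbb{D}$ and its $R$-partner outside $\overline{\mathbb{D}}$; (B) a single fixed point, necessarily on $\mathbb{T}$; (C) two distinct fixed points, both on $\mathbb{T}$.

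In case (A) I would conjugate by a disk automorphism $\tau$ sending the interior fixed point to $0$; then $\tau\varphi\tau^{-1}$ is an automorphism of $\mathbb{D}$ fixing $0$, and by the Schwarz lemma applied to it and to its inverse it is a rotation $z\mapsto e^{i\theta}z$ with $\theta\not\equiv 0 \pmod{2\pi}$. The two fixed points of this rotation are $0$ and $\infty$, the multiplier $\lambda = e^{i\theta}$ has modulus $1$, and $\varphi$ is elliptic with a fixed point in $\mathbb{D}$, giving (ii). In case (B) I would conjugate $\mathbb{D}$ onto the upper half-plane $\mathbb{H}$ by a M\"obius map $\tau$ sending $z_0\mapsto\infty$. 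The image $\tau\varphi\tau^{-1}$ is an automorphism of $\mathbb{H}$, hence of the form $z\mapsto(az+b)/(cz+d)$ with real coefficients and $ad-bc>0$; fixing only $\infty$ forces $c=0$ and (after normalizing $a=d$) yields a real translation $z\mapsto z+\beta$ with $\beta\neq 0$. Every point of $\widehat{\mathbb{C}}$ drifts to $\infty$ under forward and backward iteration of a nonzero translation, and pulling back by $\tau^{-1}$ gives $\varphi^{\pm n}(z)\to z_0$ for all $z\in\widehat{\mathbb{C}}$, which is (i). In case (C) I would choose $\tau$ sending $z_0\mapsto 0$, $z_1\mapsto\infty$, and---after a rotation and, if needed, the flip $z\mapsto -z$---carrying $\mathbb{D}$ onto the right half-plane $\{\rRe z > 0\}$; this is possible because $\tau(\mathbb{T})$ must be a M\"obius line through $0$ and $\infty$, hence an ordinary line through the origin. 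Then $\tau\varphi\tau^{-1}$ fixes $0$ and $\infty$ and preserves $\{\rRe z > 0\}$, so $\tau\varphi\tau^{-1}(z) = \lambda z$ with $\lambda > 0$, and $\varphi$ is hyperbolic. Relabeling $z_0,z_1$ if necessary so that $\lambda > 1$, we have $\lambda^n w\to\infty$ for $w\neq 0$ and $\lambda^{-n}w\to 0$ for $w\neq\infty$; transporting back by $\tau^{-1}$ yields (iii).

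Finally (iv) is immediate: cases (A)--(C) exhaust all non-identity automorphisms of $\mathbb{D}$ and produce, respectively, elliptic, parabolic, and hyperbolic transformations, leaving no room for a loxodromic one. The step I expect to require the most care is the positivity of the multiplier in case (C), which is exactly what rules out the loxodromic class; the cleanest route is the half-plane normalization above combined with the elementary observation that the dilation $z\mapsto\lambda z$ preserves $\{\rRe z > 0\}$ if and only if $\lambda\in\mathbb{R}_{>0}$.
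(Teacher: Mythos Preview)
Your argument is correct. The reflection symmetry $R\circ\varphi=\varphi\circ R$ (which indeed follows from $|b|=1$) cleanly forces the trichotomy (A)--(C), and each of your conjugacy models (rotation of $\mathbb{D}$, real translation of $\mathbb{H}$, positive dilation of a half-plane) is set up properly. The one cosmetic slip is in case (C): with $\tau(z_0)=0$, $\tau(z_1)=\infty$ and $\lambda>1$ you obtain $\varphi^{n}(z)\to z_1$ for $z\neq z_0$, which is (iii) with the labels exchanged; to match the statement verbatim you should normalize to $0<\lambda<1$ rather than $\lambda>1$. This is purely a matter of naming the attracting and repelling fixed points and does not affect the mathematics.

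As for comparison with the paper: there is nothing to compare. The paper does not prove this proposition at all; it is quoted as a known result with a reference to Grosse-Erdmann and Peris, \emph{Linear Chaos}, pages~125--126. Your write-up therefore supplies a self-contained proof where the paper relies on a citation. The route you take---fixed-point symmetry under $z\mapsto 1/\bar z$, followed by conjugation to canonical models on $\mathbb{D}$, $\mathbb{H}$, or a half-plane---is the standard classical argument and is essentially what one finds in the cited source, so your proof would integrate seamlessly if the authors wished to make the paper self-contained at this point.
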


The dynamical properties of $\varphi\in Aut(\mathbb{D})$ imply the dynamical properties of $C_{\varphi}$.

Finally we prove Theorem 1.1.

$\mathbf{Proof~of~Theorem~1.1.}$

(2)$\Rightarrow$(1) Assume that $C_{\varphi}$ is mixing. Then $C_{\varphi}$ is topologically transitive. Since the polynomials form a dense set in $H^{p}$, $H^{p}$
is separable. Since a continuous linear operator on a separable Banach space is topologically transitive if and only if it is hypercyclic (see \cite[page 10]{Grosse-Erdmann-Peris}), $C_{\varphi}$ is hypercyclic.

(1)$\Rightarrow$(3) Assume that $C_{\varphi}$ is hypercyclic. We will show that $\varphi$ has no fixed point in $\mathbb{D}$. Suppose $\varphi$ has a fixed point $z_{0}\in\mathbb{D}$. Since $C_{\varphi}$ is hypercyclic, there exists a $f\in H^{p}$ such that $\{(C_{\varphi})^{n}f:n\geqslant0\}$ is dense in $H^{p}$. We may choose a $g\in H^{p}$ with $g(z_{0})\neq f(z_{0})$. Since $\overline{\{(C_{\varphi})^{n}f:n\geqslant0\}}=H^{p}$, we may choose a sequence $\{n_{k}\}_{k=1}^{\infty}$ of positive integers with $n_{1}<n_{2}<\cdots<n_{k}<n_{k+1}<\cdots$ such that $\lim\limits_{k\rightarrow\infty}(C_{\varphi})^{n_{k}}f=g$. Since each point evaluation $k_{\lambda}:H^{p}\rightarrow\mathbb{C}(\lambda\in\mathbb{D})$ is continuous on $H^{p}$, where $k_{\lambda}(f)=f(\lambda)(f\in H^{p})$, we have $\lim\limits_{k\rightarrow\infty}((C_{\varphi})^{n_{k}}f)(z_{0})=g(z_{0})$. Notice that
$$((C_{\varphi})^{n_{k}}f)(z_{0})=(f\circ\varphi^{n_{k}})(z_{0})=f(\varphi^{n_{k}}(z_{0}))=f(z_{0}).$$
Hence $f(z_{0})=g(z_{0})$, this is a contradiction with $f(z_{0})\neq g(z_{0})$. Therefore $\varphi$ has no fixed point in $\mathbb{D}$.

(3)$\Rightarrow$(2) Suppose that $\varphi$ has no fixed point in $\mathbb{D}$. It suffices to show that $C_{\varphi}$ satisfies Kitai's criterion. By Proposition 2.4, $\varphi$ is either parabolic or hyperbolic, and in both cases $\varphi$ has fixed points $z_{0}$ and $z_{1}$ in $\mathbb{T}$ (possibly with $z_{0}=z_{1}$) such that $\varphi^{n}(z)\rightarrow z_{0}$ for all $z\in\mathbb{T}\backslash\{z_{1}\}$ and $\varphi^{-n}(z)\rightarrow z_{1}$ for all $z\in\mathbb{T}\backslash\{z_{0}\}$.

Now, for $X_{0}$ we will take the subspace of $H^{p}$ of all functions that are analytic on a neighbourhood of $\overline{\mathbb{D}}$ and that vanish at $z_{0}$. Since $z_{0}$ is a fixed point of $\varphi$, $C_{\varphi}$ maps $X_{0}$ into itself.

$\mathbf{Claim~1.}$ For any $1\leqslant p<+\infty$ we have $\overline{X_{0}}=H^{p}$.

We divide it into two cases.

Case i. If $1<p<+\infty$. First we will show that $X_{0}^{\bot}=\{0\}$. Let $1<p<+\infty$ and $x^{\ast}\in X_{0}^{\bot}$. We will show that $x^{\ast}=0$. Since $1<p<+\infty$ and $x^{\ast}\in(H^{p})^{\ast}$, there exists a unique function $g\in H^{q}$ such that
$$x^{\ast}(f)=\frac{1}{2\pi}\int_{0}^{2\pi}f(e^{i\theta})g(e^{-i\theta})d\theta(f\in H^{p}),$$
where $\frac{1}{p}+\frac{1}{q}=1$ (see \cite[pages 112-113]{Duren}). By Proposition 2.1 we have
$$\lim_{r\rightarrow1^{-}}\int_{0}^{2\pi}|g(re^{i\theta})-g(e^{i\theta})|^{q}d\theta=0.$$
Hence for each $f\in H^{p}$ we have
$$\lim_{r\rightarrow1^{-}}\frac{1}{2\pi}\int_{0}^{2\pi}f(e^{i\theta})g(re^{-i\theta})d\theta=\frac{1}{2\pi}\int_{0}^{2\pi}f(e^{i\theta})g(e^{-i\theta})d\theta.$$
Since, for any $n\geqslant0$, the functions $g_{n}:\mathbb{C}\rightarrow\mathbb{C}$ defined by $g_{n}(z)=z_{0}z^{n}-z^{n+1}$ belong to $X_{0}$ we have that $x^{\ast}(g_{n})=0(n\geqslant0)$. Notice that
\begin{align*}
x^{\ast}(g_{n})=&\frac{1}{2\pi}\int_{0}^{2\pi}g_{n}(e^{i\theta})g(e^{-i\theta})d\theta\\
=&\lim_{r\rightarrow1^{-}}\frac{1}{2\pi}\int_{0}^{2\pi}g_{n}(e^{i\theta})g(re^{-i\theta})d\theta.
\end{align*}
Hence for any $n\geqslant0$ we have
$$\lim_{r\rightarrow1^{-}}\frac{1}{2\pi}\int_{0}^{2\pi}g_{n}(e^{i\theta})g(re^{-i\theta})d\theta=0.$$
Let $g(z)=\sum\limits_{n=0}^{\infty}a_{n}z^{n}(z\in\mathbb{D})$, $0<r<1$ and $n\geqslant0$. Then
\begin{align*}
&\frac{1}{2\pi}\int_{0}^{2\pi}g_{n}(e^{i\theta})g(re^{-i\theta})d\theta\\
&=\frac{1}{2\pi}\int_{0}^{2\pi}(z_{0}e^{in\theta}-e^{i(n+1)\theta})g(re^{-i\theta})d\theta\\
&=\frac{1}{2\pi}\int_{0}^{2\pi}z_{0}e^{in\theta}g(re^{-i\theta})d\theta-\frac{1}{2\pi}\int_{0}^{2\pi}e^{i(n+1)\theta}g(re^{-i\theta})d\theta\\
&=\frac{1}{2\pi}\int_{0}^{2\pi}z_{0}e^{in\theta}(\sum_{k=0}^{\infty}a_{k}r^{k}e^{-ik\theta})d\theta-\frac{1}{2\pi}\int_{0}^{2\pi}e^{i(n+1)\theta}
(\sum_{k=0}^{\infty}a_{k}r^{k}e^{-ik\theta})d\theta\\
&=\sum_{k=0}^{\infty}\frac{1}{2\pi}\int_{0}^{2\pi}z_{0}e^{in\theta}a_{k}r^{k}e^{-ik\theta}d\theta-\sum_{k=0}^{\infty}\frac{1}{2\pi}\int_{0}^{2\pi}e^{i(n+1)\theta}
a_{k}r^{k}e^{-ik\theta}d\theta\\
&=\frac{1}{2\pi}\int_{0}^{2\pi}z_{0}a_{n}r^{n}d\theta-\frac{1}{2\pi}\int_{0}^{2\pi}a_{n+1}r^{n+1}d\theta\\
&=z_{0}a_{n}r^{n}-a_{n+1}r^{n+1}.
\end{align*}
Since $\lim\limits_{r\rightarrow1^{-}}\frac{1}{2\pi}\int_{0}^{2\pi}g_{n}(e^{i\theta})g(re^{-i\theta})d\theta=0$, we have $$\lim\limits_{r\rightarrow1^{-}}(z_{0}a_{n}r^{n}-a_{n+1}r^{n+1})=z_{0}a_{n}-a_{n+1}=0.$$
Hence $a_{n}=a_{0}z_{0}^{n}(n\geqslant0)$. Since $q=\frac{p}{p-1}>1$, we may choose $1<q_{1}\leqslant2$ with $q_{1}<q$. It is evident that $H^{q}\subseteq H^{q_{1}}$. Since $g\in H^{q}$, $g\in H^{q_{1}}$. By Proposition 2.2 we have $\{a_{n}\}_{n=0}^{\infty}\in l^{p_{1}}$, where $\frac{1}{p_{1}}+\frac{1}{q_{1}}=1$. Notice that $$\sum\limits_{n=0}^{\infty}|a_{n}|^{p_{1}}=\sum\limits_{n=0}^{\infty}|a_{0}z_{0}^{n}|^{p_{1}}=\sum\limits_{n=0}^{\infty}|a_{0}|^{p_{1}}<+\infty.$$
Hence $a_{0}=0$ and $a_{n}=0$ for $n\geqslant0$. Therefore $g(z)=0$ for all $|z|<1$ and $x^{\ast}=0$.

Second we will show that $\overline{X_{0}}=H^{p}$. Since $\overline{X_{0}}^{\bot}=X_{0}^{\bot}$ and $X_{0}^{\bot}=\{0\}$, we have $\overline{X_{0}}^{\bot}=\{0\}$. Hence $\overline{X_{0}}^{\bot\bot}=\{0\}^{\bot}=H^{p}$. Since $1<p<+\infty$, $H^{p}$ is reflexive. Since $\overline{X_{0}}$ is norm-closed, $\overline{X_{0}}$ is $\sigma(X^{\ast},X)$-closed. Finally we have $\overline{X_{0}}^{\bot\bot}=\overline{X_{0}}$. Hence $\overline{X_{0}}=H^{p}$. This proves the case $1<p<+\infty$.

Case ii. If $p=1$. We will show that $\overline{X_{0}}=H^{1}$. Let $f\in H^{1}$ and $\varepsilon>0$. We will show that there exists a $g\in X_{0}$ such that $\|f-g\|_{1}<\varepsilon$. Since the polynomials form a dense set in $H^{1}$, there exists a polynomial $h$ such that $\|f-h\|_{1}<\frac{\varepsilon}{2}$. By the case $p=2$, $X_{0}$ is dense in $H^{2}$. Hence there exists a $g\in X_{0}$ such that $\|g-h\|_{2}<\frac{\varepsilon}{2}$. Notice that $\|g-h\|_{1}\leqslant\|g-h\|_{2}$. Then $\|g-h\|_{1}<\frac{\varepsilon}{2}$. Hence $\|f-g\|_{1}\leqslant\|f-h\|_{1}+\|h-g\|_{1}<\varepsilon$. This proves the case $p=1$.

$\mathbf{Claim~2.}$ $(C_{\varphi})^{n}f\rightarrow0$ for all $f\in X_{0}$.

Let $f\in X_{0}$. Since $f\circ\varphi^{n}$ is continuous on $\overline{\mathbb{D}}$, by Proposition 2.1 we have
$$\|(C_{\varphi})^{n}f\|_{p}^{p}=\frac{1}{2\pi}\int_{0}^{2\pi}|f(\varphi^{n}(e^{i\theta}))|^{p}d\theta.$$
Since the integrands are uniformly bounded and convergent to $|f(z_{0}|^{p}=0$, for every $t$ with possibly one exception, the dominated convergence theorem implies that $(C_{\varphi})^{n}f\rightarrow0$. This proves Claim 2.

Next, for $Y_{0}$ we will take the subspace of $H^{p}$ of all functions that are analytic on a neighbourhood of $\overline{\mathbb{D}}$ and that vanish at $z_{1}$, and for $S$ we take the map $S=C_{\varphi^{-1}}$. Since $z_{1}$ is a fixed point of $\varphi^{-1}$, $S$ maps $Y_{0}$ into itself, and clearly $C_{\varphi}S=I$. It follows as above that $Y_{0}$ is dense in $H^{p}$ and that $S^{n}f\rightarrow0$ for all $f\in Y_{0}$. Therefore the conditions of Kitai's criterion are satisfied, so that $C_{\varphi}$ is mixing.

\begin{flushright}
  $\Box$
\end{flushright}

Bourdon and Shapiro \cite{Bourdon-Shapiro1,Bourdon-Shapiro2} proved Theorem 1.1 in the case $p=2$. Hence Theorem 1.1 generalizes the corresponding results in \cite{Bourdon-Shapiro1,Bourdon-Shapiro2}.


\end{document}